\theoremstyle{definition}
\newtheorem{theorem}{Theorem}[section]
\newtheorem{lemma}[theorem]{Lemma}
\newtheorem{corollary}[theorem]{Corollary}
\newtheorem{proposition}[theorem]{Proposition}
\newtheorem{defi}[theorem]{Definition}
\newtheorem{remark}[theorem]{Remark}
\numberwithin{equation}{section}
\def\ie{{\em i.e.,} }
\def\eg{{\em e.g.} }
\newfont\bbf{msbm10 at 12pt}
\def\eps{\varepsilon}
\def\phi{\varphi}
\def\R{{\mathbb R}}
\def\C{{\mathbb C}}
\def\None{{\mathbb N}}
\def\N0{{\mathbb N}_0}
\def\N{{\mathbb N}}
\def\Z{{\mathbb Z}}
\def\C0{{\mathfrak C}_0}
\def\A0{{\mathfrak A}}
\def\Cl{\mathop\mathrm{Cl}}
\def\mesh{\mathop\mathrm{mesh}}
\def\diam{\mbox{\rm diam}\,}
\def\orb{\mbox{\rm orb}}
\def\theta{\vartheta}
\def\le{\leqslant}
\def\IL{\underleftarrow\lim([0,s/2],T_s)}
\def\ILp{\underleftarrow\lim([0,s'/2],T_{s'})}
\def\chain{{\mathcal C}}
\begin{document}

\title[On isotopy and unimodal inverse limits]{On isotopy and unimodal inverse limit spaces}
\author{H.~Bruin and S.~\v{S}timac}
\address{University of Surrey and University of Zagreb}
\thanks{HB was supported by EPSRC grant EP/F037112/1.
S\v{S} was supported in part by NSF 0604958 and in part by the MZOS Grant
037-0372791-2802 of the Republic of Croatia.}

\subjclass[2000]{54H20, 37B45, 37E05}
\keywords{isotopy, tent map, inverse limit space}

\begin{abstract}
We prove that every self-homeomorphism $h : K_s \to K_s$ on the inverse limit space $K_s$ of tent map $T_s$
with slope $s \in (\sqrt 2, 2]$ is isotopic to a power of the shift-homeomorphism $\sigma^R : K_s \to K_s$.
\end{abstract}

\maketitle

\baselineskip=18pt

\section{Introduction}\label{sec:intro}
The solution of Ingram's Conjecture constitutes a major advancement in the classification of unimodal
inverse limit spaces and the group of self-homeomorphisms on them. This conjecture was posed by Tom
Ingram in 1992 for tent maps $T_s : [0, 1] \to [0, 1]$ with slope $\pm s$, $s \in [1, 2]$, defined as
$T_s(x) = \min\{sx, s(1-x)\}$. The turning point is $c = \frac12$ and we denote its iterates by
$c_n = T^n_s(c)$. The inverse limit space  $K_s = \IL$ consists of the {\em core}
$\underleftarrow\lim([c_2, c_1],T_s)$ and the $0$-composant $\C0$, \ie the composant of the point
$\bar 0 := (\dots, 0,0,0)$, which compactifies on the core of the inverse limit space. Ingram's Conjecture
reads:
\begin{quote}
If $1 \leq s < s' \leq 2$, then the corresponding inverse limit spaces
 $\IL$ and $\ILp$ are non-homeomorphic.
\end{quote}
The first results towards solving this conjecture were obtained for tent maps
with a finite critical orbit \cite{Kail2,Stim,Betal}.
Raines and \v{S}timac \cite{RS} extended these results to tent maps
with a possibly infinite, but non-recurrent critical orbit. Recently Ingram's Conjecture was solved completely
(in the affirmative) in \cite{BBS}, but we still know very little of the structure of inverse limit spaces
(and their subcontinua) for the case that $\orb(c)$ is infinite and recurrent, see
\cite{BBD, BB, Bsubcontinua}.

Given a continuum $K$ and $x \in K$, the {\em composant $A$ of $x$}
is the union of the proper subcontinua of $K$ containing $x$.
For slopes $s \in (\sqrt{2}, 2]$, the core is indecomposable
(\ie it cannot be written as the union of two proper subcontinua),
and in this case we also proved \cite{BBS}
that any self-homeomorphism $h:K_s \to K_s$ is pseudo-isotopic to a power $\sigma^R$ of the
shift-homeomorphism $\sigma$ on the core. This means that $h$ permutes the composants of the core of $K_s$
in the same way as $\sigma^R$ does, and it is a priori a weaker property than isotopy. This is for instance
illustrated by the $\sin \frac1x$-continuum, defined as the graph $\{ (x, \sin \frac1x) : x \in (0,1]\}$
compactified with a {\em bar} $\{ 0 \} \times [-1,1]$. There are homeomorphisms that reverse the orientation
of the bar, and these are always pseudo-isotopic, but never isotopic, to the identity. Since such
$\sin \frac1x$-continua are precisely the non-trivial subcontinua of Fibonacci-like inverse limit spaces
\cite{Bsubcontinua}, this example is very relevant to our paper.

In this paper we make the step from pseudo-isotopy to isotopy. To this end, we exploit so-called {\em folding points}, \ie points in the core of $K_s$ where the local structure of the core of $K_s$ is not
that of a Cantor set cross an arc. In the next section we prove
the following results:

\begin{theorem}\label{thm:folding}
If $s \in (\sqrt2, 2]$, and $h:K_s \to K_s$ is a homeomorphism, then there is $R \in \Z$ such that
$h(x) = \sigma^R(x)$ for every folding point $x$ in $K_s$.
\end{theorem}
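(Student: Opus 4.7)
The first step is to invoke the pseudo-isotopy result of \cite{BBS}, which furnishes $R \in \Z$ such that $h$ and $\sigma^R$ induce the same permutation of composants of the core of $K_s$. Setting $g := \sigma^{-R} \circ h$, the map $g$ is pseudo-isotopic to the identity and hence preserves every composant set-wise, so the task reduces to showing that $g$ restricts to the identity on the set $\F$ of folding points of $K_s$.

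Folding points admit a purely topological characterization as those points at which $K_s$ fails to be locally homeomorphic to a Cantor set cross an arc, equivalently (dynamically) as points $x = (x_0,x_1,\dots)$ all of whose coordinates lie in $\omega(c,T_s)$. Thus $g(\F) = \F$; combined with composant-preservation, for each $x \in \F$ with composant $A_x$ in the core we have $g(x) \in A_x \cap \F$. The heart of the proof is then a local rigidity statement: within a single composant, folding points are topologically distinguishable. One attaches to each folding point $x$ a local invariant, for example by refining chain covers of $K_s$ near $\omega(c)$ and recording the pattern of arcs (or links) accumulating at $x$. Because $\sigma$ shifts these patterns in a controlled way and distinct folding points on a single composant carry distinct patterns, the homeomorphism $g$, preserving composants and local topology, must fix every $x \in \F$ pointwise.

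The main obstacle is establishing this local rigidity uniformly across all slopes $s \in (\sqrt 2, 2]$, especially when $\orb(c)$ is infinite and recurrent: there the folding points can cluster into Cantor-like patterns inside a single composant and $K_s$ may look locally like the $\sin\tfrac1x$-continua, whose bars \emph{can} be reversed by pseudo-isotopy. The key technical work is therefore a delicate chain-theoretic analysis, refining the techniques of \cite{BBS}, showing that the ambient structure of $K_s$ at a folding point is rigid enough to forbid the bar-reversal phenomenon available for isolated $\sin\tfrac1x$-continua. Once this rigidity is in place, $g|_\F = \mathrm{id}_\F$, and hence $h|_\F = \sigma^R|_\F$, which is the desired conclusion.
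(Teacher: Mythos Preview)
Your reduction via $g := \sigma^{-R} \circ h$ and the observation that $g$ preserves both $\F$ and each composant are correct. But the step you label the ``heart of the proof'' is not carried out: you assert that one can attach to each folding point a local invariant separating distinct folding points within the same composant, and that $g$ must respect it, yet you neither define the invariant nor verify either claim. You yourself flag this as the main obstacle and then describe its resolution only as ``a delicate chain-theoretic analysis, refining the techniques of \cite{BBS}''. That is a statement of intent, not an argument. Bear in mind too that a single composant---indeed a single arc-component---can contain infinitely many folding points (cf.\ Remark~\ref{rem:folding}), so any such separation result is far from automatic, and the bar-reversal worry you raise is not dispelled by anything in the proposal.

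The paper does not attempt local rigidity at all. It instead uses the characterization of folding points as limits of $p$-points of unbounded level (Lemma~\ref{lem:folding_p-points}) to approximate a given folding point $x$ by \emph{salient} $q$-points $s'_{d_i} \to x$. The machinery of \cite{BBS} already controls where $h$ sends salient points: by \cite[Theorem~4.1]{BBS}, $h(s'_{d_{j_i}})$ lies in a link-arc-component whose midpoint is the salient $p$-point $s_{d_{j_i}+M}$, with $M$ independent of $i$, and $R := M - q + p$ is independent of all choices by \cite[Corollary~5.3]{BBS}. One then has two sequences converging to $y := h(x)$: the images $h(s'_{d_{j_i}})$ (by continuity of $h$) and the midpoints $s_{d_{j_i}+M} = \sigma^R(s'_{d_{j_i}})$ (since both lie in links $\ell^y_{p_i}$ of vanishing diameter). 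Continuity of $\sigma^R$ forces $\sigma^R(x) = y = h(x)$. Thus the salient-point control from \cite{BBS} passes to folding points by a direct limit argument, with no need to distinguish folding points from one another inside a composant.
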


Folding points $x = (\dots, x_{-2}, x_{-1}, x_0)$ are characterized by the fact that each entry $x_{-k}$
belongs to the omega-limit set $\omega(c)$ of the turning point $c = \frac12$, see \cite{Rain}. This gives
the immediate corollary for those slopes such that the critical orbit $\orb(c)$ is  dense in $[c_2, c_1]$,
which according to \cite{BM} holds for Lebesgue a.e.\ $s \in [\sqrt{2}, 2]$.

\begin{corollary}\label{cor:dense}
If $\orb(c)$ is dense in $[c_2, c_1]$, then for every homeomorphism $h:K_s \to K_s$ there is $R \in \Z$ such
that  $h = \sigma^R$ on the core of $K_s$.
\end{corollary}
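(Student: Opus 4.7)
The plan is to reduce Corollary~\ref{cor:dense} directly to Theorem~\ref{thm:folding} by observing that under the density hypothesis every point of the core is a folding point, so the conclusion of Theorem~\ref{thm:folding} spreads automatically from a distinguished set to all of $\underleftarrow\lim([c_2,c_1],T_s)$.

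First I would verify that $\orb(c)$ dense in $[c_2,c_1]$ forces $\omega(c)=[c_2,c_1]$. Density rules out periodicity of $c$ (a finite orbit cannot be dense in a non-degenerate interval), so the tails $\{c_n : n\ge N\}$ differ from $\orb(c)$ only by finitely many points and remain dense. Consequently $\overline{\{c_n : n\ge N\}}=[c_2,c_1]$ for every $N$, whence $\omega(c)=\bigcap_N \overline{\{c_n : n\ge N\}}=[c_2,c_1]$.

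Next I would invoke the characterization of folding points attributed to \cite{Rain} and recalled just after Theorem~\ref{thm:folding}: a point $x=(\dots,x_{-2},x_{-1},x_0)$ of the core is a folding point if and only if $x_{-k}\in\omega(c)$ for every $k\ge 0$. Since every coordinate of every point of the core lies in $[c_2,c_1]=\omega(c)$, this condition is vacuously satisfied, so \emph{every} point of the core is a folding point.

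Finally, Theorem~\ref{thm:folding} supplies an integer $R\in\Z$ such that $h(x)=\sigma^R(x)$ at every folding point; by the previous step this identity holds on all of the core, which is exactly the assertion of Corollary~\ref{cor:dense}. The only non-cosmetic step is the identification $\omega(c)=[c_2,c_1]$, and this is essentially immediate, so I expect no genuine obstacle — the corollary is a packaging result that shows how much stronger Theorem~\ref{thm:folding} becomes once folding points form a dense (indeed, full) subset of the core.
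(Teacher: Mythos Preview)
Your proposal is correct and follows essentially the same route as the paper's own proof: both argue that density of $\orb(c)$ forces $\omega(c)=[c_2,c_1]$, so by the characterization in \cite{Rain} every core point is a folding point, and then Theorem~\ref{thm:folding} applies directly. Your version simply spells out the step $\omega(c)=[c_2,c_1]$ a bit more explicitly than the paper does.
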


The more difficult case, however, is when $\orb(c)$ is not dense in $[c_2, c_1]$. In this case, $h$ can be at
best isotopic to a power of the shift, because at non-folding points, where the core of $K_s$ is a Cantor set
cross an arc, $h$ can easily act as a local translation. It is shown in \cite{BKRS} that for tent maps with
non-recurrent critical point (or in fact, more generally long-branched tent maps), every homeomorphism
$h:K_s \to K_s$ is indeed isotopic to a power of the shift. The proof exploits the fact that in this case,
so-called $p$-points (indicating folds in the arc-components of $K_s$) are separated from each other, at least in arc-length semi-metric.
Here we prove the general result.

\begin{theorem}\label{thm:Fibo_iso}
If $s \in (\sqrt2, 2]$, and $h:K_s \to K_s$ is a homeomorphism, then there exists $R \in \Z$ such that $h$ is isotopic to $\sigma^R$.
\end{theorem}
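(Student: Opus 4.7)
The plan is to reduce, via Theorem~\ref{thm:folding}, to a homeomorphism that fixes every folding point pointwise, and then to interpolate along arc-components. Let $R \in \Z$ be the integer supplied by Theorem~\ref{thm:folding} and set $g := \sigma^{-R} \circ h$. Since $\sigma^R$ is itself a self-homeomorphism of $K_s$, it suffices to exhibit an isotopy from $g$ to the identity. By Theorem~\ref{thm:folding}, $g$ is the identity on the set $F \subset K_s$ of folding points; by the pseudo-isotopy theorem of \cite{BBS} applied to $h$ with the same $R$, the map $g$ preserves every composant of the core of $K_s$, and therefore every arc-component.

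A point $x \in K_s$ is a folding point iff every coordinate $x_{-k}$ lies in $\omega(c)$, and at every non-folding point the core is locally homeomorphic to a Cantor set times an arc. Consequently, removing $F$ from an arc-component $\alpha$ produces a countable family of open subarcs, whose closures we call \emph{regular arcs}. Since $g$ preserves $\alpha$, fixes its intersection with $F$, and a local product chart around an interior point of a regular arc $A$ forces $g$ to map $A$ onto a regular arc with the same endpoints and the same orientation (otherwise $g$ would permute the arc-components meeting a nearby cross-section, contradicting the preservation of arc-components), on each regular arc $A$ the restriction $g|_A$ is an orientation-preserving self-homeomorphism of $A$ fixing both endpoints.

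For each regular arc $A$, parametrise $A$ by arc length via $\tau_A : [0, \ell(A)] \to A$ and let $f_A$ be the increasing self-homeomorphism of $[0, \ell(A)]$ with $g \circ \tau_A = \tau_A \circ f_A$. For $t \in [0, 1]$ define
\[
H_t(x) := \tau_A\bigl((1-t)\,\tau_A^{-1}(x) + t\, f_A(\tau_A^{-1}(x))\bigr) \qquad \text{for } x \in A,
\]
and $H_t(x) := x$ for $x \in F$. Then $H_0 = \mathrm{id}$, $H_1 = g$, each $H_t$ is a bijection whose inverse is given by the analogous straight-line interpolation built from $g^{-1}$, and $(x, t) \mapsto H_t(x)$ is obviously continuous on any single regular arc and on $F$. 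It remains to check joint continuity of $H : K_s \times [0, 1] \to K_s$, which together with compactness of $K_s$ will make each $H_t$ a homeomorphism and $H$ the required isotopy.

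The main obstacle is precisely this joint continuity at $F \times [0, 1]$. Away from $F$ the local product structure reduces $H$ to a family of interpolations within a single chart $C \times I$ (with $C$ a Cantor set and $I$ an arc), which is routine. At folding points, the long-branched case of \cite{BKRS} exploits that regular arcs have lengths bounded below and are separated in a convenient arc-length semi-metric; in the recurrent setting treated here, $F$ is typically a Cantor set meeting many arc-components and regular arcs accumulate on $F$ at infinitely many scales, as already visible in the $\sin\frac{1}{x}$-continuum example from the introduction. The argument should use that $g$ is uniformly continuous and fixes $F$ to force $\diam A \to 0$ as $A$ ranges over regular arcs approaching $F$, and then show that $H_t(A) \subset A$ remains in a small $K_s$-neighbourhood of $F$ for every $t$. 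Making this precise, probably by means of a chain cover of $K_s$ whose mesh is refined near $F$ so that each link meets a controlled set of regular arcs and controlled portions of the neighbouring arc-components, is the technical heart of the proof.
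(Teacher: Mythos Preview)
Your outline has the right shape---reduce to $g=\sigma^{-R}\circ h$ pseudo-isotopic to the identity, note that $g$ fixes folding points, and interpolate along arcs---but it leaves as acknowledged gaps exactly the two places where the paper has to do real work.

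First, the orientation-preservation claim. Your argument that a local Cantor$\times$arc chart ``forces $g$ to map $A$ onto a regular arc with the same endpoints and the same orientation (otherwise $g$ would permute the arc-components meeting a nearby cross-section)'' does not hold up: preserving each leaf of a product chart is perfectly compatible with reversing orientation on every leaf simultaneously. On arc-components containing two or more folding points, orientation preservation does follow from $g$ fixing those points, but on arc-components with at most one folding point (and these exist) you need an independent argument. The paper supplies one (Lemma~\ref{lem:orient}) that relies on Lemma~\ref{lem:finite}, which in turn uses the link-symmetric-arc machinery of \cite{BBS} to pin down where $h$ sends individual $p$-points. Without orientation preservation your convex interpolation $(1-t)s+tf_A(s)$ need not be injective, so the isotopy collapses.

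Second, the joint continuity at $F\times[0,1]$ is precisely the crux, and you have only sketched what one would like to prove. The paper's route is not via chain covers refined near $F$ as you suggest, but via a Hausdorff-continuity statement: if $z^n\to z$ then the arcs $[z^n,g(z^n)]$ converge to $[z,g(z)]$ in the Hausdorff metric (Proposition~\ref{prop:Hausdorff}). Proving this uses a strengthened version of Lemma~\ref{lem:finite} that allows the relevant link-component $W_x$ to contain folding points (Lemma~\ref{lem:infinite}), and the argument again leans on the $p$-point/folding-pattern results from \cite{BBS}. Once Hausdorff continuity is available, continuity of the isotopy at folding points is immediate, because at such points the arc $[z,g(z)]$ degenerates to a single point. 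Your uniform-continuity heuristic (``$\diam A\to 0$ as regular arcs approach $F$'') is not what is needed: regular arcs need not shrink near $F$, but the arcs $[z,g(z)]$ do.

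A minor point: the inverse of $H_t$ is not given by the interpolation built from $g^{-1}$; bijectivity on each regular arc follows instead from monotonicity of $s\mapsto(1-t)s+tf_A(s)$ once $f_A$ is known to be increasing.
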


The paper is organized as follows. In Section~\ref{sec:folding} we give basic definitions and prove results
on how homeomorphisms act on folding points, \ie Theorem~\ref{thm:folding} and Corollary~\ref{cor:dense}.
These proofs depend largely on the results obtained in \cite{BBS}. In
Section~\ref{sec:homeo} we present the additional arguments needed for the isotopy result and finally prove
Theorem~\ref{thm:Fibo_iso}.

\section{Inverse limit spaces of tent maps and folding points}\label{sec:folding}

Let $\N = \{ 1,2,3,\dots\}$ be the set of natural numbers and $\N_0 = \None \cup \{ 0 \}$. The tent map
$T_s:[0,1] \to [0,1]$ with slope $\pm s$ is defined as $T_s(x) = \min\{sx, s(1-x)\}$. The critical or turning
point is $c = 1/2$ and we write $c_k = T_s^k(c)$, so in particular $c_1 = s/2$ and $c_2 = s(1-s/2)$. Also let
$\orb(c)$ and $\omega(c)$ be the orbit and the omega-limit set of $c$. We will restrict $T_s$ to the interval
$I = [0,s/2]$; this is larger than the {\em core} $[c_2, c_1] = [s-s^2/2, s/2]$, but it contains the fixed
point $0$ on which the $0$-composant $\C0$ is based.

The inverse limit space $K_s = \IL$ is
\[
\{ x = (\dots, x_{-2}, x_{-1}, x_0) :  T_s(x_{i-1}) = x_i \in [0,s/2]
\text{ for all } i \leq 0\},
\]
equipped with metric $d(x,y) = \sum_{n \le 0} 2^n |x_n - y_n|$ and {\em induced} $($or {\em shift$)$
homeo\-morphism}
\[
\sigma(\dots, x_{-2}, x_{-1}, x_0) = (\dots, x_{-2}, x_{-1}, x_0, T_s(x_0)).
\]
Let $\pi_k: \IL \to I$, $\pi_k(x) = x_{-k}$ be the $k$-th projection map. Since $0 \in I$, the endpoint
$\bar 0 := (\dots, 0,0,0)$ is contained in  $\IL$.
The composant of $\IL$ of $\bar 0$ will be denoted as $\C0$;
it is a ray converging to, but disjoint from the core $\underleftarrow\lim([c_2, c_1],T_s)$ of the inverse limit space.
We fix $s \in (\sqrt{2}, 2]$; for these parameters $T_s$ is not
renormalizable and $\underleftarrow\lim([c_2, c_1],T_s)$ is indecomposable.
Moreover, the arc-component of $\bar 0$ coincides with the composant of
$\bar 0$, but for points in the core of $K_s$, we have to make the
distinction between arc-component and composant more carefully.

A point $x = (\dots, x_{-2}, x_{-1}, x_0) \in K_s$ is called a {\em $p$-point} if $x_{-p-l} = c$ for some
$l \in \N_0$. The number $L_p(x) := l$ is the {\em $p$-level} of $x$. In particular,
$x_0 = T_s^{p + l}(c)$. By convention, the endpoint $\bar 0$ of $\C0$ is also a $p$-point and
$L_p(\bar 0) := \infty$, for every $p$. The ordered set of all $p$-points of the composant $\C0$ is denoted
by $E_p$, and the ordered set of all $p$-points of $p$-level $l$ by $E_{p,l}$. Given an arc $A \subset K_s$
with successive $p$-points $x^0, \dots , x^n$, the {\em $p$-folding pattern} of $A$ is the sequence
$$
FP_p(A) := L_p(x^0), \dots , L_p(x^n).
$$
Note that every arc of $\C0$ has only finitely many $p$-points, but an arc $A$ of the core of $K_s$ can have
infinitely many $p$-points. In this case, if $(u^i)_{i \in \Z}$ is the sequence of successive $p$-points
of $A$, then $FP_p(A) = (L_p(u^i))_{i \in \Z}$. The {\em folding pattern of the composant} $\C0$, denoted by
$FP(\C0)$, is the sequence $L_p(z^1), L_p(z^2), \dots $, $L_p(z^n), \dots$, where
$E_p = \{ z^1, z^2, \dots , z^n, \dots \}$ and $p$ is any nonnegative integer. Let $q \in \None$, $q > p$, and
$E_q = \{ y^0, y^1, y^2, \dots \}$. Since $\sigma^{q-p}$ is an order-preserving homeomorphism of $\C0$, it is
easy to see that $\sigma^{q-p}(z^i) = y^i$ for every $i \in \None$, and $L_p(z^i) = L_q(y^i)$. Therefore, the
folding pattern of $\C0$ does not depend on $p$.

\begin{defi}\label{df:salient}
Let $(s_i)_{i \in \None}$ be a sequence of $p$-points of $\C0$ such that $0 \leq L_p(x) < L_p(s_i)$ for every
$p$-point $x \in (\bar 0 , s_i)$. We call $p$-points satisfying this property \emph{salient}.
\end{defi}

Since for every slope $s > 1$ and $p \in \N_0$, the folding pattern of the $0$-composant $\C0$ starts as
$\infty \ 0 \ 1 \ 0 \ 2 \ 0 \ 1 \ \dots$, and since by definition $L_p(s_1) > 0$, we have $L_p(s_1) = 1$.
Also, since $s_i = \sigma^{i-1}(s_1)$, $L_p(s_i) = i$, for every $i \in \None$. Note that the salient
$p$-points depend on $p$: if $p \geq q$, then the salient $p$-point $s_i$ equals the salient $q$-point
$s_{i+p-q}$.

A {\em folding point} is any point $x$ in the core of $K_s$ such that no neighborhood of $x$ in  core of
$K_s$ is homeomorphic to the product of a Cantor set and an arc. In \cite{Rain} it was shown that
$x = (\dots , x_{-2}, x_{-1}, x_0)$ is a folding point if and only if $x_{-k} \in \omega(c)$ for all $k \geq 0$.
We can characterize folding points in terms of $p$-points as follows:

\begin{lemma}\label{lem:folding_p-points}
Let $p$ be arbitrary. A point $x \in K_s$ is a folding point if and only if there is a sequence of $p$-points
$(x^k)_{k \in \N}$ such that $x^k \to x$ and $L_p(x^k) \to \infty$.
\end{lemma}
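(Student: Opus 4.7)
The plan leans on Raines' criterion (cited just above the statement): $x = (\ldots, x_{-2}, x_{-1}, x_0) \in K_s$ is a folding point if and only if $x_{-k} \in \omega(c)$ for every $k \geq 0$. The bridge between this criterion and $p$-points is the elementary observation that if $y$ is a $p$-point with $L_p(y) = l$, then $y_{-p-l} = c$, and pushing forward by $T_s$ gives $y_{-j} = T_s^{p+l-j}(c) = c_{p+l-j}$ for every $0 \leq j \leq p + l$. Thus the coordinates of a $p$-point with small index record iterates of $c$, and the depth of iteration is controlled by $L_p$.

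The $(\Leftarrow)$ direction is immediate. If $(x^k)$ are $p$-points with $x^k \to x$ and $L_p(x^k) \to \infty$, then for each fixed $j \geq 0$ and $k$ large we have $x^k_{-j} = c_{p + L_p(x^k) - j}$; since $x^k_{-j} \to x_{-j}$ (by definition of the product metric) and $p + L_p(x^k) - j \to \infty$, we exhibit $x_{-j}$ as an accumulation point of the orbit of $c$, i.e.\ $x_{-j} \in \omega(c)$. Raines' criterion then forces $x$ to be a folding point.

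For the $(\Rightarrow)$ direction, given a folding point $x$, I would ``plant'' $c$ at a far-back coordinate and use uniform continuity of $T_s^k$ to force the forward iterates to match $x$ near coordinate $0$. Fix $k$; since $x_{-k} \in \omega(c)$, pick $m_k \geq k$ with $|c_{m_k} - x_{-k}| < s^{-k}/k$, set $n_k = m_k + k$ and $l_k = n_k - p$, and define $x^k \in K_s$ by $x^k_{-p - l_k} = c$ (which forces $x^k_{-j} = c_{n_k - j}$ for $0 \leq j \leq n_k$) together with arbitrary $T_s$-preimages for $j > n_k$; such preimages exist because $T_s : I \to I$ is surjective. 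Using that $T_s$ is Lipschitz with constant $s$, and that $T_s^{k-j}(x_{-k}) = x_{-j}$ by the inverse limit relation, one obtains for $0 \leq j \leq k$
\[
|x^k_{-j} - x_{-j}| = |T_s^{k-j}(c_{m_k}) - T_s^{k-j}(x_{-k})| \leq s^{k-j} \cdot s^{-k}/k = s^{-j}/k \leq 1/k.
\]
Hence $x^k \to x$ in the product metric, while $l_k \to \infty$ since $m_k \geq k$.

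The main technical concern is the definition of $L_p$ when the critical orbit is (pre)periodic: if $c_q = c$ for some $q \geq 1$, then $x^k_{-p-l} = c$ holds for infinitely many $l$ and, taken as the minimum, $L_p(x^k)$ would be bounded by $q$ rather than tending to infinity. For the parameters $s \in (\sqrt 2, 2]$ of interest the generic (and main) case is an infinite aperiodic critical orbit, for which $c_{n_k - j} = c$ forces $j = n_k$ and so $L_p(x^k) = l_k$ as constructed. In the measure-zero residual case one argues directly using the finite structure of $\omega(c)$, but the heart of the proof is the Lipschitz-propagation estimate displayed above.
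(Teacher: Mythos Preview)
Your proof is correct and follows essentially the same route as the paper's: both directions rest on Raines' criterion together with the identity $x^k_{-j} = c_{p+L_p(x^k)-j}$, and for $(\Rightarrow)$ you, like the paper, plant $c$ at a deep coordinate near $x_{-k}$ and push forward by continuity of $T_s$. The only cosmetic difference is that you make the forward step quantitative via the Lipschitz constant $s$ (building one approximant per depth $k$), whereas the paper invokes continuity of $T_s^{m-j}$ and then extracts a diagonal subsequence; your caveat about periodic $c$ is extra care that the paper simply does not address.
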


\begin{proof}
$\Rightarrow$ Take $m \geq p$ arbitrary. Since $\pi_m(x) \in \omega(c)$ there is a sequence of post-critical
points $c_{n_i} \to \pi_m(x)$. This means that any point $y^i = (\dots, c_{n_i}, c_{n_i+1}, \dots , c_{n_i+m})$
is a $p$-point with $p$-level $L_p(y^i) = n_i+m-p$. Furthermore, for each $0 \leq j \leq m$,
$|\pi_j(y^i)-\pi_j(x)| \to 0$ as $i \to \infty$. Since $m$ is arbitrary, we can construct a diagonal sequence
$(x^k)_{k \in \N}$ of $p$-points, by taking a single element from $(y^i)_{i \in \N}$ for each $m$, such that
$\sup_{j \leq k} |\pi_j(x^k) - \pi_j(x)| \to 0$ as $k \to \infty$. This proves that $x^k \to x$ and
$L_p(x^k) \to \infty$.
\\
$\Leftarrow$ Take $m$ arbitrary. Since $x^k \to x$, also
$|\pi_m(x^k) - \pi_m(x)| \to 0$ and $\pi_m(x^k) = c_n$ for $n = L_p(x^k) + p-m$. But $L_p(x^k) \to \infty$, so $\pi_m(x) \in \omega(c)$.
\end{proof}

A continuum is {\em chainable} if for every $\eps > 0$, there is a cover $\{ \ell^1, \dots , \ell^n\}$ of
open sets (called {\em links}) of diameter $< \eps$ such that $\ell^i \cap \ell^j \neq \emptyset$ if and only
if $|i-j| \leq 1$. Such a cover is called a {\em chain}. Clearly the interval $[0,s/2]$ is chainable.

\begin{defi}\label{df:chain}
We call $\chain_p$ a {\em natural} chain of $\IL$ if
\begin{enumerate}
\item there is a chain $\{ I^1_p, I^2_p, \dots , I^n_p \}$ of $[0,s/2]$ such that
$\ell^j_p := \pi_p^{-1}(I^j_p)$ are the links of $\chain_p$;
\item each point $x \in \cup_{i=0}^p T_s^{-i}(c)$ is the boundary point of some link $I^j_p$;
\item for each $i$ there is $j$ such that $T_s(I^i_{p+1}) \subset I^j_p$.
\end{enumerate}
\end{defi}

If $\max_j |I^j_p| < \eps s^{-p}/2$ then $\mesh(\chain_p) := \max\{ \diam(\ell) :\ell \in \chain_p\} < \eps$,
which shows that $\IL$ is indeed chainable.
Condition (3) ensures that $\chain_{p+1}$ \emph{refines} $\chain_p$ (written $\chain_{p+1} \preceq \chain_p$).

We are now ready to prove Theorem~\ref{thm:folding}.

\begin{proof}[Proof of Theorem~\ref{thm:folding}]
Let $h : K_s \to K_s$ be a homeomorphism. Let $x, y \in K_s$ be folding points with $h(x) = y$. For
$i \in \N_0$ let $q_i, p_i \in \N$ be such that for sequences of chains $(\chain_{q_i})_{i \in \N_0}$ and
$(\chain_{p_i})_{i \in \N_0}$ of $K_s$ we have
$$\cdots \prec h(\chain_{q_{i+1}}) \prec \chain_{p_{i+1}} \prec h(\chain_{q_i}) \prec \chain_{p_i} \prec \cdots \prec h(\chain_{q_{1}}) \prec \chain_{p_{1}} \prec h(\chain_{q}) \prec \chain_{p},$$
where $q_0 = q$ and $p_0 = p$. Let $(\ell^x_{q_i})_{i \in \N_0}$ be sequence of links such that
$x \in \ell^x_{q_i} \in \chain_{q_i}$, and similarly for $(\ell^y_{p_i})_{i \in \N_0}$. Then
$\ell^x_{q_{i+1}} \subset \ell^x_{q_i}$, $\ell^y_{p_{i+1}} \subset \ell^y_{p_i}$ and
$h(\ell^x_{q_i}) \subset \ell^y_{p_i}$. Let $(s'_{d_i})_{i \in \N}$ be a sequence of salient $q$-points
with $s'_{d_i} \to x$ as $i \to \infty$. Then for every $i$ there exist $j_i$ such that
$s'_{d_{j_i}} \in \ell^x_{q_i}$,  $h(s'_{d_{j_i}}) \in \ell^y_{p_i}$ and $h(s'_{d_{j_i}}) \to y$ as
$i \to \infty$. By \cite[Theorem 4.1]{BBS} the midpoint of the arc component $A_i$ of $\ell^y_{p_i}$
which contains $h(s'_{d_{j_i}})$ is a salient $p_i$-point $s''_{m_i}$. Since $s''_{m_i}, y \in \ell^y_{p_i}$,
for every $i$ and $\diam \, \ell^y_{p_i} \to 0$ as $i \to \infty$, we have $s''_{m_i} \to y$. Since
$s'_{d_i}$ is a salient $q$-point and $s'_{d_i} \in \ell^x_q$, $s''_{m_i}$ can be also considered as a salient
$p$-point and is also the midpoint of the arc component $B_i \supset A_i$ of $\ell^y_{p}$ which contains
$h(s'_{d_{j_i}})$. Therefore, $s''_{m_i} = s_{d_{j_i}+M}$, where $M$ is as in
\cite[Theorem 4.1]{BBS}.

Let $R = M-q+p$. By \cite[Corollary 5.3]{BBS}, $R$ does not depend on $q$, $p$ and $M$. Since
$\sigma^R : K_s \to K_s$ is a homeomorphism, and since $s'_{d_i} \to x$ as $i \to \infty$, we have
$\sigma^R(s'_{d_i}) \to \sigma^R(x)$ as $i \to \infty$. Note that $\sigma^R(s'_{d_{j_i}}) = s_{d_{j_i}+M}$
and $s_{d_{j_i}+M} \to y$. Therefore $\sigma^R(x) = y$, \ie $\sigma^R(x) = h(x)$.
\end{proof}

\begin{proof}[Proof of Corollary~\ref{cor:dense}]
If $\orb(c)$ is dense in $[c_2, c_1]$, every point $x$ in the core of $K_s$ satisfies
$\pi_k(x) \in \omega(c)$ for all $k \in \N$. By \cite{Rain}, this means that every point
is a folding point, and hence the previous theorem implies that $h \equiv \sigma^R$ on
the core of $K_s$.
\end{proof}

\begin{remark}\label{rem:folding}
A point $x \in K_s$ is an {\em endpoint} of an atriodic continuum, if for every pair of subcontinua
$A$ and $B$ containing $x$, either $A \subset B$ or $B \subset A$. The notion of folding point is more
general than that of end-point. An example of a folding point that is not an endpoint is the midpoint
$x$ of a {\em double spiral} $S$, \ie a continuous image of $\R$ containing a single folding point $x$
and two sequences of $p$-points
$$
\dots y^k \prec y^{k+1} \prec \dots \prec x \prec \dots \prec z^{k+1} \prec z^k \dots
$$
converging to $x$ such that the arc-length $\bar{d}(y^k, y^{k+1}),\ \bar{d}(z^k, z^{k+1}) \to 0$ as
$k \to \infty$. Here $\prec$ denotes the induced order on $S$.

It is natural to classify arc-components $\A0$ according to the folding points they
may contain. For arc-components $\A0$, we have the following possibilities:
\begin{itemize}
\item $\A0$ contains no folding point.
\item $\A0$ contains one folding point $x$, \eg if $x$ is an end-point
of $\A0$ or $\A0$ is a double spiral.
\item $\A0$ contains two folding points, \eg if $\A0$ is the bar of a $\sin \frac1x$-continuum.
\item $\A0$ contains countably many folding points. One can construct tent maps such that the folding points of
its inverse limit space belong to finitely many arc-components that are periodic under $\sigma$, but where there are still countably folding
points.\footnote{An example is the tent-map where $c_1$ has symbolic itinerary
(kneading sequence) $\nu = 100101^201^301^4$ $01^5\dots$. Then the two-sided
itineraries of folding points are limits of $\{\sigma^j(\nu)\}_{j \geq 0}$.
The only such two-sided limit sequences are $1^\infty . 1^\infty$
and $\{\sigma^j(1^\infty . 01^\infty) : j \in \Z\}$.
Since they all have left tail $\dots 1111$, these folding points
belong to the arc-component of the point $(\dots,p,p,p)$ for the fixed
point $p = \frac{s}{1+s}$. This use of two-sided symbolic itineraries
was introduced for inverse limit spaces in \cite{BD}.}
\item $\A0$ contains uncountably many folding points, \eg if
$\omega(c) = [c_2,c_1]$, because then every point in the core is a
folding point.
\end{itemize}

This is clearly only a first step towards a complete classification.
\end{remark}

\begin{defi}\label{df:linksym}
Let $\ell^0, \ell^1, \dots, \ell^k$ be those links in $\chain_p$ that are successively visited by an arc
$A \subset \C0$ (hence $\ell^i \neq \ell^{i+1}$, $\ell^i \cap \ell^{i+1} \neq \emptyset$ and
$\ell^i = \ell^{i+2}$ is possible if $A$ turns in $\ell^{i+1}$). Let $A^i \subset \ell^i$ be the
corresponding arc components such that $\Cl A^i$ are subarcs of $A$.
We call the arc $A$
\begin{itemize}
\item {\em $p$-link symmetric} if $\ell^i = \ell^{k-i}$ for $i = 0, \dots, k$;
\item {\em maximal $p$-link symmetric} if it is  $p$-link symmetric
and there is no $p$-link symmetric arc $B \supset A$ and passing through more links than $A$.
\end{itemize}
The $p$-point of $A^{k/2}$ with the highest $p$-level is called the
{\em center} of $A$, and the link $\ell^{k/2}$ is called the {\em central link} of $A$.
\end{defi}

\section{Isotopic Homeomorphisms of Unimodal Inverse Limits}
\label{sec:homeo}

It is shown in \cite{BBS} that every salient $p$-point $s_l \in \C0$ is the
center of the maximal $p$-link symmetric arc $A_l$.
We denote the central link that $s_l$ belongs to
by $\ell_p^{s_l}$.
For a better understanding of this section, let us mention that
a key idea in \cite{BBS}
is that under a homeomorphism $h$ such that $h(\chain_q) \prec \chain_p$,
(maximal) $q$-link symmetric arcs have to
map to (maximal) $p$-link symmetric arcs, and for this reason
$h(s_m) \in \ell_p^{s_l}$ for some appropriate $m \in \N$ (see \cite[Theorem 4.1]{BBS}).

\begin{lemma}\label{lem:finite}
Let $h : K_s \to K_s$ be a homeomorphism pseudo-isotopic to $\sigma^R$, and let $q, p \in \N_0$ be such that $h(\chain_{q}) \preceq \chain_{p}$.
Let $x$ be a $q$-point in the core of $K_s$ and let
$\ell_p^{s_l} \in \chain_{p}$ be the link containing both $\sigma^p(x)$
and salient $p$-point $s_l$, where $l = L_p(\sigma^R(x))$.
Suppose that the arc-component $W_x$
of $\ell_p^{s_l}$ containing $\sigma^R(x)$ does not contain any folding point. Then $h(x) \in W_x$.
\end{lemma}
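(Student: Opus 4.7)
The plan is to combine the BBS machinery on maximal link-symmetric arcs with the pseudo-isotopy hypothesis, and then to exploit the no-folding-point assumption on $W_x$ to identify the arc-component containing $h(x)$.

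First, I will locate $h(x)$ in the correct link. Since $x$ is a $q$-point in the core, it is the center of a maximal $q$-link symmetric arc $A \subset K_s$, whose central link is the link of $\chain_q$ containing $x$. Applying \cite[Theorem 4.1]{BBS} to $h$ (valid because $h(\chain_q) \preceq \chain_p$), the image $h(A)$ is a maximal $p$-link symmetric arc, with center $h(x)$ a $p$-point. Because $h$ is pseudo-isotopic to $\sigma^R$, this center has $p$-level equal to $L_p(\sigma^R(x)) = l$, so the central link of $h(A)$ must be the same link $\ell_p^{s_l}$ that contains $\sigma^R(x)$; in particular $h(x) \in \ell_p^{s_l}$. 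Pseudo-isotopy moreover places $h(x)$ and $\sigma^R(x)$ in a common composant $C$ of $K_s$, and since $h(A)$ is connected and contains $h(x)$, we have $h(A) \subset C$; likewise $\sigma^R(A) \subset C$.

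It then remains to show that the two arc-components of $\ell_p^{s_l} \cap C$ containing respectively $h(x)$ and $\sigma^R(x)$ coincide. Suppose, toward contradiction, that $h(x)$ lies in an arc-component $W' \ne W_x$. Then $h(A)$ and $\sigma^R(A)$ enter $\ell_p^{s_l}$ through distinct arc-components $W'$ and $W_x$ of the single composant $C$, while sharing the same $p$-folding pattern across the adjacent links. Iterating this configuration across a sequence of finer refinements $h(\chain_{q_i}) \preceq \chain_{p_i}$, in the spirit of the proof of Theorem~\ref{thm:folding}, should produce a sequence of $p$-points of strictly increasing $p$-level whose projection accumulates in $\Cl W_x$. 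By Lemma~\ref{lem:folding_p-points}, the limit of such a sequence would be a folding point sitting in $W_x$, contradicting the hypothesis; hence $W' = W_x$ and $h(x) \in W_x$.

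The main obstacle I anticipate is this accumulation step: one must track how entire $p$-link-symmetric arcs (not merely their centers) are positioned inside the composant $C$, and verify that two such arcs anchored on distinct arc-components of $\ell_p^{s_l}$ but with identical folding pattern necessarily force $p$-points of unbounded level to accumulate on the ``more regular'' arc-component $W_x$. The diagonal refinement construction of Theorem~\ref{thm:folding}, combined with precise bookkeeping of how salient $p$-points of prescribed level distribute themselves across arc-components of a single link, should provide the needed contradiction.
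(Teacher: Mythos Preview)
Your proposal has two genuine gaps.

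\textbf{Misuse of the BBS machinery.} You invoke \cite[Theorem~4.1]{BBS} for an arbitrary $q$-point $x$ in the core, asserting that $x$ is the center of a maximal $q$-link symmetric arc and that $h(x)$ is then a $p$-point of $p$-level exactly $L_p(\sigma^R(x))$. But the results of \cite{BBS} used here (Theorem~4.1, Proposition~4.2) are proved for points on the $0$-composant $\C0$, and in particular for \emph{salient} points; they do not apply verbatim to an arbitrary $q$-point in the core. Moreover, pseudo-isotopy only says that $h$ and $\sigma^R$ agree on composants; it does not by itself pin down the $p$-level of $h(x)$, so ``this center has $p$-level equal to $l$'' is unjustified.

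\textbf{The contradiction step is only a hope.} You acknowledge that the accumulation argument is ``the main obstacle'', and indeed nothing in your outline explains why $h(x) \in W' \ne W_x$ would force $p$-points of unbounded level to accumulate on $W_x$. The diagonal-refinement idea from Theorem~\ref{thm:folding} tracks salient points and their images; it gives no mechanism for manufacturing high-level $p$-points inside a prescribed arc-component $W_x$.

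The paper avoids both difficulties by a short approximation-from-$\C0$ argument. Because $W_x$ contains no folding point, Lemma~\ref{lem:folding_p-points} gives that $W_x$ has only finitely many $p$-points, hence a finite $p$-folding pattern. Density of $\C0$ then supplies arc-components $W_i \subset \C0$ of the same link with $FP_p(W_i) = FP_p(W_x)$ and $W_i \to W_x$ in the Hausdorff metric, together with $q$-points $x_i \in \C0$ with $L_q(x_i) = L_q(x)$, $x_i \to x$, and $\sigma^R(x_i) \in W_i$. For these $x_i \in \C0$ the construction in the proof of \cite[Proposition~4.2]{BBS} applies directly and yields $h(x_i) \in W_i$; continuity of $h$ then gives $h(x) \in W_x$. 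The no-folding-point hypothesis is used precisely to ensure $FP_p(W_x)$ is finite so that matching $W_i$ exist, not to run a contradiction.
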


\begin{proof}
Since $W_x$ does not contain any folding point, it contains finitely many $p$-points. Note that $W_x$
contains at least one $p$-point since $\sigma^R(x) \in W_x$ is a $p$-point. Since $\C0$ is dense in $K_s$,
there exists a sequence $(W_i)_{i \in \N}$ of arc-components of $\ell_p^{s_l}$ such that  $W_i \subset \C0$,
$FP_p(W_i) = FP_p(W_x)$ for every $i \in \N$, and $W_i \to W_x$ in the Hausdorff metric. Let
$(x_i)_{i \in \N}$ be a sequence of $q$-points such that for every $i \in \N$, $L_q(x_i) = L_q(x)$,
$x_i \to x$ and $\sigma^R(x_i) \in W_i$. Obviously $(x_i)_{i \in \N} \subset \C0$,
$L_p(\sigma^R(x_i)) = L_p(\sigma^R(x))$ and $\sigma^R(x_i) \to \sigma^R(x)$. Since $h$ is a
homeomorphisms, $h(x_i) \to h(x)$. It follows by the construction in the proof of \cite[Proposition 4.2]{BBS}
that $h(x_i) \in W_i$ for every $i \in \N$. Therefore $h(x) \in W_x$.
\end{proof}

\begin{corollary}\label{cor:arc-components}
Let $h : K_s \to K_s$ be a homeomorphism pseudo-isotopic to $\sigma^R$. Then $h$ permutes arc-components
of $K_s$ in the same way as $\sigma^R$.
\end{corollary}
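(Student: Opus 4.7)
The plan is to show $h(A) = \sigma^R(A)$ for every arc-component $A$ of $K_s$. Since $h$ is a homeomorphism, $h(A)$ is itself an arc-component, and arc-components partition $K_s$, so producing any single point in $h(A) \cap \sigma^R(A)$ forces the two arc-components to coincide. I split into three cases. First, if $A = \C0$, then $\sigma(\bar 0) = \bar 0$ gives $\sigma^R(\C0) = \C0$; pseudo-isotopy preserves the composant $\C0$, and $\C0$ is a single arc-component, so $h(\C0) = \C0 = \sigma^R(\C0)$. Second, if $A$ is an arc-component of the core that contains a folding point $y$, then Theorem~\ref{thm:folding} gives $h(y) = \sigma^R(y) \in h(A) \cap \sigma^R(A)$, so $h(A) = \sigma^R(A)$.

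The remaining case is that $A$ is an arc-component of the core containing no folding point. Here I apply Lemma~\ref{lem:finite} to a $q$-point $x' \in A$ (after selecting $p$ and $q$ with $h(\chain_q) \preceq \chain_p$). Since $A$ has no folding point, neither does $\sigma^R(A)$, so the arc-component $W_{x'}$ of $\sigma^R(x')$ in $\ell_p^{s_l}$ --- being contained in $\sigma^R(A)$ --- contains no folding point either. Lemma~\ref{lem:finite} then delivers $h(x') \in W_{x'} \subset \sigma^R(A)$, so that $h(x') \in h(A) \cap \sigma^R(A)$, and the two arc-components coincide.

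The delicate step is producing the $q$-point $x' \in A$ in this third case. Since $A$ has no folding point it cannot be a singleton (a singleton arc-component in the core would itself be a folding point), so $A$ is non-degenerate and $\pi_m(A) \subset [c_2, c_1]$ is a non-degenerate subinterval for every $m$. I plan to argue that some such $\pi_m(A)$ meets $\orb(c)$: if $c \notin \pi_m(A)$ for every $m$, then the identity $\pi_{m-1}(A) = T_s(\pi_m(A))$ makes $T_s$ act monotonically on each $\pi_m(A)$, forcing $|\pi_m(A)|$ to shrink geometrically as $m \to \infty$; combined with the way a non-degenerate arc-component must thread through the indecomposable core and accumulate on subcontinua of definite diameter, this should force $c \in \pi_m(A)$ for some $m$, yielding the required $q$-point. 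This is the step I expect to require the most care.
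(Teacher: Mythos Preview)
Your case-split and use of Lemma~\ref{lem:finite} match the paper's proof exactly. The only differences are that you treat $\C0$ separately (harmlessly), and that you explicitly flag the existence of a $q$-point in $A$ as needing justification, whereas the paper simply chooses $p$ so that $h(\A0)$ spans more than one link of $\chain_p$, deduces that $\A0$ spans more than one link of $\chain_q$, and then asserts without further comment that some link-component $V$ of $\A0$ contains a $q$-point.

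Your shrinking argument for that step has the right starting move, but the contradiction you state is not quite the one you need: the conclusion $|\pi_m(A)| \to 0$ does not by itself contradict $\overline A$ having positive diameter, since the metric weights the low-index coordinates most heavily. A clean way to finish is this. If $c \notin \pi_m(A)$ for every $m \ge q$, then each bonding map $T_s : \overline{\pi_{m+1}(A)} \to \overline{\pi_m(A)}$ is a homeomorphism of closed intervals, so $\underleftarrow\lim\bigl(\overline{\pi_m(A)}, T_s\bigr)$ is a compact arc. This arc contains $A$ and is arc-connected, hence equals the arc-component $A$. But then the two endpoints of $A$ lie in $A$ and are therefore non-folding points, so they have neighbourhoods in $K_s$ homeomorphic to a Cantor set times an open arc --- impossible for an endpoint of its own arc-component. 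Hence $c \in \pi_m(A)$ for some (indeed infinitely many) $m \ge q$, producing the required $q$-point $x'$.
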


\begin{proof}
Since $h$ is a homeomorphism, $h$ maps arc-components to arc-components. Let $\A0$ be an arc-component of
$K_s$. Let us suppose that $\A0$ contains a folding point, say $x$. Then $h(x) = \sigma^R(x)$ implies
$h(\A0) = \sigma^R(\A0)$.

Let us assume now that $\A0$ does not contain any folding point. There exist $q, p \in \N_0$ such that
$h(\chain_{q}) \preceq \chain_{p}$ and that $h(\A0)$ is not contained in a single link of $\chain_{p}$.
Then $\A0$ is not contained in a single link of $\chain_{q}$. Let $\ell_q \in \chain_q$ and
$V \in \ell_q \cap \A0$ be an arc-component of $\ell_q$ such that $V$ contains at least one $q$-point,
say $x$. Let $\ell_p^{s_l} \in \chain_{p}$ be such that $l = L_p(\sigma^R(x))$. Let $W \subset \ell_p^{s_l}$
be arc-component containing $\sigma^R(x)$. Since $\A0$ does not contain any folding point, $h(\A0)$ does not
contain any folding point implying $W$ does not contain any folding point. Then, by Lemma \ref{lem:finite},
$h(x) \in W$ implying $h(\A0) = \sigma^R(\A0)$.
\end{proof}

\begin{lemma}\label{lem:orient}
Let $h : K_s \to K_s$ be a homeomorphism that is pseudo-isotopic to the identity. Then $h$ preserves
orientation of every arc-component $\A0$, \ie given a parametrization $\phi : \R \to \A0$ (or
$\phi : [0,1] \to \A0$ or $\phi : [0,\infty) \to \A0$) that induces an order $\prec$ on $\A0$,
then $x \prec y$ implies $h(x) \prec h(y)$.
\end{lemma}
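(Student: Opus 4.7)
My plan is to establish that $h|_{\A0}$ is order-preserving on every arc-component $\A0$, exploiting $R=0$ and the rigidity already proved. By Corollary~\ref{cor:arc-components} we have $h(\A0)=\A0$, and by Theorem~\ref{thm:folding} $h$ fixes every folding point. Because $h$ sends each arc in $\A0$ homeomorphically to an arc in $\A0$, and any homeomorphism between arcs is monotone, local orientation of $h|_{\A0}$ is well-defined and locally constant; connectedness of $\A0$ then makes it globally constant, so it suffices to exhibit a single pair $x\prec y$ in $\A0$ with $h(x)\prec h(y)$.

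For $\A0=\C0$, the point $\bar 0$ is an endpoint of $K_s$ (the subcontinua of $K_s$ containing $\bar 0$ are nested), and it is the only point of $\C0$ with this property, so $h(\bar 0)=\bar 0$ because a homeomorphism sends endpoints to endpoints. For any $y\in\C0$ with $y\succ \bar 0$, the image $h([\bar 0,y])$ is an arc in $\C0$ with endpoints $\bar 0$ and $h(y)$, hence equals $[\bar 0,h(y)]$; the restriction of $h$ to this arc fixes one endpoint, and so must send $y$ to $h(y)$ in an order-preserving manner. Varying $y$ yields monotonicity on all of $\C0$.

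For $\A0$ contained in the core, either every point of $\A0$ is a folding point (e.g.\ when $\omega(c)=[c_2,c_1]$), in which case Theorem~\ref{thm:folding} forces $h|_{\A0}=\mathrm{id}$; or there is a non-folding point $a\in\A0$. In the latter case, closedness of the folding-point set yields a $K_s$-open neighborhood $U$ of $a$ that avoids every folding point. Selecting $q$-points $x\prec y$ in $\A0\cap U$ and refining chains $\chain_q,\chain_p$ with $h(\chain_q)\preceq\chain_p$ to sufficiently small mesh, one arranges that the arc-components $W_x,W_y$ from Lemma~\ref{lem:finite} are disjoint sub-arcs of $\A0$ lying inside $U$ (hence free of folding points). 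Lemma~\ref{lem:finite} then places $h(x)\in W_x$ and $h(y)\in W_y$, and the arc-order on $\A0$ forces $h(x)\prec h(y)$.

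The main obstacle I anticipate is precisely the last step: producing $q$-points $x\prec y$ within the folding-point-free region $U$ whose link-arcs $W_x,W_y$ from Lemma~\ref{lem:finite} are simultaneously disjoint, free of folding points, and correctly ordered along $\A0$. This couples closedness of the folding-point set with a sufficient refinement of the chain sequence, and relies on the availability of $p$-points inside $\A0\cap U$, a property that must be checked carefully given that arc-components in the core can have delicate symbolic structure.
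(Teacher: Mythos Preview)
Your argument is correct and follows essentially the same route as the paper: reduce to checking a single pair of points (using that $h|_{\A0}$, being a homeomorphism on each sub-arc, is globally either order-preserving or order-reversing), and then invoke Lemma~\ref{lem:finite} to pin $h(x)\in W_x$, $h(y)\in W_y$ for a well-chosen pair of $q$-points. The paper organizes the core case slightly differently, splitting according to whether $\A0$ contains at least two folding points (in which case two such points serve directly as the witnessing pair via Theorem~\ref{thm:folding}) versus at most one; your split into ``all folding points'' versus ``some non-folding point'' is equivalent in effect, and your separate treatment of $\C0$ via the endpoint $\bar 0$ is a clean addition the paper omits. The obstacle you flag is genuine but routine: any nondegenerate sub-arc of $\A0$ contains $q$-points for all sufficiently large $q$ (since $\pi_m$ stretches it across $c$ for large $m$), and by first fixing $x\prec y$ in $\A0\cap U$ and then enlarging $p$ (hence $q$) so that the links have mesh smaller than both $d(x,y)$ and the distance from $\{x,y\}$ to the complement of $U$, the arc-components $W_x,W_y$ are forced to lie in $U$ and in disjoint links, giving the required order.
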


\begin{proof}
Let us first suppose that $h:K_s \to K_s$ is any homeomorphism. Then, by \cite[Theorem 1.2]{BBS} there is
an $R \in \Z$ such that $h$, restricted to the core, is pseudo-isotopic to $\sigma^R$, \ie $h$ permutes the
composants of the core of the inverse limit in the same way as $\sigma^R$. Therefore, by Corollary \ref{cor:arc-components}, it permutes the arc-components of the inverse limit in the same way as $\sigma^R$.

Let $\A0, \A0'$ be arc-components of the core such that $h, \sigma^R : \A0 \to \A0'$, and let $x, y \in \A0$,
$x \prec y$. We want to prove that $h(x) \prec h(y)$ if and only if $\sigma^R(x) \prec \sigma^R(y)$. Since $h$
and $\sigma^R$ are homeomorphisms on arc-components, each of them could be either order preserving or order
reversing. Therefore, to prove the claim we only need to pick two convenient points $u, v \in \A0$,
$u \prec v$, and check if we have either $h(u) \prec h(v)$ and $\sigma^R(u) \prec \sigma^R(v)$, or
$h(v) \prec h(u)$ and $\sigma^R(v) \prec \sigma^R(u)$. If $\A0$ contains at least two folding points, we can
choose $u, v$ to be folding points. Then $h(u) = \sigma^R(u)$ and $h(v) = \sigma^R(v)$ and the claim follows.

Let us suppose now that $\A0$ contains at most one folding point. Then there exist $q, p \in \N_0$ such that
$h(\chain_{q}) \preceq \chain_{p}$ and $q$-points $u, v \in \A0$, $u \prec v$ (on the same side of the folding
point if there exists one) such that $\sigma^R(u)$ and $\sigma^R(v)$ are contained in disjoint links of
$\chain_{p}$ each of which does not contain the folding point of $\A0$, if there exists one.

Let $\ell_p^{s_j}, \ell_p^{s_k} \in \chain_{p}$ with $j = L_p(\sigma^R(u))$ and
$k = L_p(\sigma^R(v))$ be links containing $\sigma^R(u)$ and $\sigma^R(v)$ respectively. Let
$W_u \subset \ell_p^{s_j}$ and $W_v \subset \ell_p^{s_k}$ be arc-components containing $\sigma^R(u)$ and
$\sigma^R(v)$ respectively. Then $W_u$ and $W_v$ do not contain any folding point and by
Lemma~\ref{lem:finite} $h(u) \in W_u$ and $h(v) \in W_v$. Therefore obviously $h(u) \prec h(v)$
if and only if $\sigma^R(u) \prec \sigma^R(v)$.

If $h$ is a homeomorphism that is pseudo-isotopic to the identity, then $R = 0$ and the claim of lemma follows.
\end{proof}

\begin{corollary}\label{cor:arc_A}
If $h$ is pseudo-isotopic to the identity, then the arc $A$ connecting $x$ and $h(x)$ is a single point,
or $A$ contains no folding point.
\end{corollary}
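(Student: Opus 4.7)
The plan is to argue by contradiction. Assume the arc $A$ joining $x$ and $h(x)$ is non-degenerate and contains a folding point $y$; combining order preservation on arc-components with the fact that $h$ fixes every folding point will yield a contradiction.

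First I would set up the arc $A$ as a subarc of a single arc-component. Since $h$ is pseudo-isotopic to the identity (so that the integer $R$ produced by the earlier results is $0$), Corollary~\ref{cor:arc-components} shows that $h$ maps every arc-component of $K_s$ to itself. Hence $x$ and $h(x)$ lie in a common arc-component $\A0$, and I would fix a parametrization of $\A0$ inducing an order $\prec$ for which, by Lemma~\ref{lem:orient}, $h$ is order-preserving. Assuming $A$ is not a single point, we may take, without loss of generality, $x \prec h(x)$, so that $A = [x, h(x)]$ in this order.

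Now suppose $y \in A$ is a folding point. Theorem~\ref{thm:folding}, together with $R=0$, yields $h(y) = y$. I would then split into subcases according to the position of $y$ in $A$. If $x \prec y \prec h(x)$, applying the order-preserving map $h$ to $x \prec y$ gives $h(x) \prec h(y) = y$, contradicting $y \prec h(x)$. If $y = x$, then $x$ is itself a folding point, so $h(x) = x$ and $A$ degenerates to a point, contrary to our assumption. If $y = h(x)$, then $h(h(x)) = h(y) = y = h(x)$, and injectivity of $h$ forces $x = h(x)$, again collapsing $A$. In each case we reach a contradiction, so no folding point can lie on $A$.

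The principal point requiring care is the use of the order on $\A0$: the arc-component need not be globally simple, and one must be sure that a well-defined parametrization with respect to which $h$ is orientation-preserving is available. This is exactly what Lemma~\ref{lem:orient} supplies, and once it is invoked the contradiction is a direct juxtaposition of order preservation with the identification of $h$ with the identity on folding points.
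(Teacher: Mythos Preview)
Your argument is correct and follows essentially the same route as the paper: reduce to $x \prec h(x)$, invoke Theorem~\ref{thm:folding} to fix a folding point $y\in A$, and derive a contradiction with the order preservation of Lemma~\ref{lem:orient}. The only difference is cosmetic---you treat the endpoint cases $y=x$ and $y=h(x)$ separately, whereas the paper folds them into the single inequality chain $x \prec y = h(y) \prec h(x)$.
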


\begin{proof} Since $h$ is pseudo-isotopic to the identity, $x$ and $h(x)$ belong to the same composant,
and in fact the same arc-component. So let $A$ be the arc connecting $x$ and $h(x)$. If $x = h(x)$, then
there is nothing to prove. If $h(x) \neq x$, say $x \prec h(x)$, and $A$ contains a folding point $y$,
then $x \prec y = h(y) \prec h(x)$, contradicting Lemma~\ref{lem:orient}.
\end{proof}

In particular, any homeomorphism $h$ that is pseudo-isotopic to the identity cannot reverse the bar of a
$\sin \frac1x$-continuum, or reverse a {\em double spiral} $S \subset K_s$, see Remark~\ref{rem:folding}.
The next lemma strengthens Lemma~\ref{lem:finite} to the case that $W_x$ is allowed to contain folding points.

\begin{lemma}\label{lem:infinite}
Let $h : K_s \to K_s$ be a homeomorphism that is pseudo-isotopic to the identity. Let $q, p \in \N_0$ be
such that $h(\chain_{q}) \preceq \chain_{p}$. Let $x$ be a $q$-point in the core of $K_s$ and let
$\ell_p^{s_l} \in \chain_{p}$ be such that $l = L_p(x)$. Let $W_x \subset \ell_p^{s_l}$ be an arc-component
of $\ell_p^{s_l}$ containing $x$. Then $h(x) \in W_x$.
\end{lemma}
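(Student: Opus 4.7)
Plan. Write $y := h(x)$. If $x = y$ the claim holds trivially, so assume $x \ne y$. Then $y$ cannot be a folding point, for if it were, Theorem~\ref{thm:folding} with $R = 0$ would give $h^{-1}(y) = y$ and hence $x = y$. By Corollary~\ref{cor:arc-components}, $y$ lies in the arc-component $\A0$ of $x$, and by Corollary~\ref{cor:arc_A} the arc $A = [x,y] \subset \A0$ contains no folding point. It suffices to prove $A \subset \ell_p^{s_l}$, since then the arc-connected set $A$ starting at $x$ lies in $W_x$ and hence $y \in W_x$.

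I follow the scheme of Lemma~\ref{lem:finite} and approximate $x$ from $\C0$. Using density of $\C0$ in $K_s$, together with density of the $q$-points of $\C0$ of fixed level $L_q(x)$ inside the fiber $\pi_q^{-1}(\pi_q(x))$ near $x$, one picks a sequence of $q$-points $x_n \in \C0$ with $L_q(x_n) = L_q(x)$ (so $L_p(x_n) = l$) and $x_n \to x$. Openness of $\ell_p^{s_l}$ puts $x_n \in \ell_p^{s_l}$ for $n$ large, and the arc-component $W_{x_n}$ of $\ell_p^{s_l}$ containing $x_n$ lies entirely in $\C0$, so it contains no folding point. Lemma~\ref{lem:finite} then gives $h(x_n) \in W_{x_n}$. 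Continuity yields $h(x_n) \to y$; since the $W_{x_n}$ Hausdorff-accumulate on $\overline{W_x}$, this already forces $y \in \overline{W_x}$.

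To upgrade $\overline{W_x}$ to $W_x$ itself, I assume for contradiction that $A$ exits $\ell_p^{s_l}$, and let $z$ be a first exit point on $A \cap \partial \ell_p^{s_l}$. By Definition~\ref{df:chain}, $\pi_p(z) \in \bigcup_{i=0}^{p} T_s^{-i}(c)$, so $z$ is a $(p-i)$-point with $L_{p-i}(z) = 0$ for some $0 \le i \le p$. A folding-point exit is forbidden by Corollary~\ref{cor:arc_A}. A non-folding exit would have to be ruled out using the sharper BBS control \cite[Proposition~4.2]{BBS}, which locates $h(x_n)$ sharply inside $W_{x_n}$ (as the unique $p$-point of level $l$ there); passing to the limit should place $y$ at the analogous interior position of $W_x$ along $\A0$, which is incompatible with $A$ crossing $\partial \ell_p^{s_l}$ at $z$.

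The main obstacle I anticipate is precisely this last step. The naive Hausdorff-limit argument only yields $y \in \overline{W_x}$, and because arc-components of the open link $\ell_p^{s_l}$ need not be open in $K_s$, $y$ could a priori sit on $\partial W_x$. Pinning down the precise position of $h(x_n)$ within $W_{x_n}$ via the BBS construction, together with verifying density of the level-preserving approximants $x_n \in \C0$, are the two technical pieces that would close the gap.
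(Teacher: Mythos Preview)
Your overall strategy---approximate $x$ by level-preserving $q$-points $x_n\in\C0$, apply Lemma~\ref{lem:finite} to get $h(x_n)\in W_{x_n}$, and pass to the limit---is exactly the engine the paper uses. But the step you treat as routine, ``the $W_{x_n}$ Hausdorff-accumulate on $\overline{W_x}$'', is not justified and is in general false. The arc-component of $\ell_p^{s_l}$ through a nearby point $x_n\in\C0$ can be much longer than $W_x$: near a folding point $z\in W_x$ the approximating arcs in $\C0$ may fold many times and then continue well beyond the portion of $\ell_p^{s_l}$ that $W_x$ occupies. So from $h(x_n)\in W_{x_n}\subset\ell_p^{s_l}$ you only obtain $h(x)\in\overline{\ell_p^{s_l}}$, which says nothing about $h(x)$ lying in $W_x$ (or even in $\overline{W_x}$). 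Trying to rescue this by arguing that the arcs $[x_n,h(x_n)]$ Hausdorff-converge to $[x,h(x)]$ is circular: that is precisely Proposition~\ref{prop:Hausdorff}, whose proof relies on the present lemma. So the gap is not only the ``upgrade'' you flag at the end; it appears one step earlier.

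The paper closes this gap by a case split on how $x$ sits among the folding points of $W_x$. If $x$ is itself a folding point, Theorem~\ref{thm:folding} gives $h(x)=x$. If $x$ lies between two folding points $y,z\in W_x$, Corollary~\ref{cor:arc_A} traps $h(x)\in[y,z]\subset W_x$ immediately. The only genuinely hard case is $x\in(y,z)$ with $z\in W_x$ a folding point, $y\in\partial W_x$, and $(y,z)$ free of folding points. Here the paper does \emph{not} take arbitrary $x_n\to x$; it first chooses arc-components $W_i\subset\C0$ of $\ell_p^{s_l}$ whose midpoints $m_i$ satisfy $m_i\to z$ and $L_p(m_i)\to\infty$ (this exists by Lemma~\ref{lem:folding_p-points}), forcing $W_i\to(y,z]$ in the Hausdorff metric. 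Only then does it pick $x_i\in W_i$ with $L_q(x_i)=L_q(x)$ and run the \cite{BBS} argument. This targeted choice of the approximating arc-components is exactly the missing idea in your proposal: it is what guarantees the limit lands in $W_x$ rather than merely in $\overline{\ell_p^{s_l}}$.
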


\begin{proof}
If $W_x$ does not contain any folding point the proof follows by Lemma~\ref{lem:finite} for $R=0$.

Let $W_x$ contain at least one folding point. If $x$ is a folding point, then
$h(x) = x \in W_x$ by Theorem~\ref{thm:folding}.
If $W_x$ contains at least two folding points, say $y$ and $z$, such that
$x \in [y, z] \subset W_x$, then $h(x) \in [y, z] \subset W_x$ by Corollary~\ref{cor:arc_A}.

The last possibility is that $x \in (y, z) \subset W_x$, where $z \in  W_x$ is a folding point,
$y \notin  W_x$, \ie $y$ is a boundary point of $W_x$, and $(y, z)$ does not contain any
folding point. Since $\C0$ is dense in $K_s$, there exists a sequence $(W_i)_{i \in \N}$ of arc-components
of $\ell_p^{s_l}$ such that $W_i \subset \C0$ and $W_i \to (y, z]$ in the Hausdorff metric. Note that for
the sequence of $p$-points $(m_i)_{i \in \N}$, where $m_i$ is the midpoint of $W_i$, we have $m_i \to z$
and $L_p(m_i) \to \infty$. Also, for every $i$ large enough, every $W_i$ contains a $q$-point $x_i$ with
$L_q(x_i) = L_q(x)$, and for the sequence of $q$-points $(x_i)_{i \in \N}$ we have $x_i \to x$. Obviously
$(x_i)_{i \in \N} \subset \C0$ and $L_p(x_i) = L_p(x)$. By the proof of \cite[Proposition 4.2]{BBS} applied
for $R=0$ we have $h(x_i) \in W_i$ for every $i$. Since $h$ is a homeomorphisms, $h(x_i) \to h(x)$. Therefore,
$h(x) \in (y, z) \subset W_x$.
\end{proof}

\begin{proposition}\label{prop:Hausdorff}
Let $h:K_s \to K_s$ be a homeomorphism. If $z^n \to z$ and $A^n = [z^n, h(z^n)]$, then
$A^n \to A := [z, h(z)]$ in Hausdorff metric.
\end{proposition}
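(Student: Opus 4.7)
My plan is to exploit the compactness of the hyperspace of subcontinua of $K_s$ under the Hausdorff metric. Since every sequence of subcontinua has a Hausdorff-convergent subsequence, it suffices to prove that every subsequential Hausdorff limit $B$ of $(A^n)$ coincides with $A$.

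First I would establish that any such $B$ is a subcontinuum of $K_s$ containing both endpoints of $A$. Indeed, $z^n \in A^n$ together with $z^n \to z$ forces $z \in B$, while continuity of $h$ gives $h(z^n) \to h(z)$ and hence $h(z) \in B$; and Hausdorff limits of continua are continua. Once $B \subseteq A$ is known, the proof is finished, because an arc (being homeomorphic to $[0,1]$) admits no proper subcontinuum containing both of its endpoints, so $B = A$ automatically.

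The heart of the proof is therefore to show $B \subseteq A$. Given $\eps > 0$, I would choose $p$ large enough that $\mesh \chain_p < \eps$ and then $q$ with $h(\chain_q) \preceq \chain_p$. The fact that $A = [z, h(z)]$ is meant as an arc implicitly requires $h$ to preserve the arc-component of $z$, so (possibly after composing with $\sigma^{-R}$ using Corollary~\ref{cor:arc-components}) one is in the pseudo-isotopic-to-identity case, and Lemma~\ref{lem:infinite} applies: for every $q$-point $x$ in the core, the arc $[x, h(x)]$ lies in a single arc-component of a link of $\chain_p$, and thus has $d$-diameter at most $\mesh \chain_p < \eps$. Approximating $z$ and each $z^n$ by nearby $q$-points in $\C0$, and using Corollary~\ref{cor:arc_A} to rule out folding points from the interior of $A$ when $z \ne h(z)$, I would then splice together these short controlled arcs link-by-link along the chain to conclude that $A^n$ is contained in an $\eps$-neighborhood of $A$ for all sufficiently large $n$. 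Letting $\eps \downarrow 0$ yields $B \subseteq A$.

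The main obstacle is precisely this inclusion $B \subseteq A$: because $K_s$ is indecomposable for $s \in (\sqrt{2}, 2]$, Hausdorff limits of arcs with converging endpoints can a priori be very large subcontinua—potentially all of $K_s$—if the arcs are allowed to wind. The link-by-link confinement furnished by Lemma~\ref{lem:infinite} is the structural input that prevents $A^n$ from spreading out, and it is on this confinement that the whole convergence argument rests.
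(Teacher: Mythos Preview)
Your setup matches the paper's: pass to a Hausdorff-convergent subsequence with limit $B$, reduce to the pseudo-isotopic-to-identity case, and invoke Lemma~\ref{lem:infinite} so that every $q$-point $x$ has $h(x)$ in the same link--arc-component $W_x$, hence $[x,h(x)]\subset W_x$ has diameter below $\mesh\chain_p$. That deduction is correct. The gap is in the next step, where you propose to ``splice together these short controlled arcs link-by-link'' to confine $A^n$ near $A$. This is where the argument actually lives, and as written it does not go through: the integer $q$ depends on $\eps$, so you cannot pass to a limit over a dense set of $q$-points; approximating $z^n$ by $q$-points in $\C0$ lands you in the wrong arc-component, so those small arcs $[\tilde z^n,h(\tilde z^n)]$ say nothing about $A^n=[z^n,h(z^n)]$; and there is no mechanism offered for why the pieces of $A^n$ \emph{between} $q$-points cannot wander far from $A$.

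What is missing is the orientation-preservation contradiction of Lemma~\ref{lem:orient}, which is precisely how the paper closes the argument. If $B\supsetneq A$, one finds a link $\ell\in\chain_p$ disjoint from $A$ through which $A^{n_k}$ must turn; the turning arc-component $W^{n_k}\subset A^{n_k}$ then contains a point $y^{n_k}$ with $h(y^{n_k})\in W^{n_k}$ as well (this is where Lemma~\ref{lem:infinite} is used). But then both $y^{n_k}$ and $h(y^{n_k})$ lie strictly between $z^{n_k}$ and $h(z^{n_k})$ on the arc, so $z^{n_k}\prec y^{n_k}$ while $h(y^{n_k})\prec h(z^{n_k})$, contradicting that $h$ preserves orientation. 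Your proposal cites Corollary~\ref{cor:arc_A} only to rule out folding points in $A$, but never invokes the underlying order-preservation; without it the confinement $B\subseteq A$ cannot be obtained.
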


\begin{proof}
We know that $h$ is pseudo-isotopic to $\sigma^R$ for some $R \in \Z$; by composing $h$ with $\sigma^{-R}$
we can assume that $R = 0$. By Corollary \ref{cor:arc-components}, $h$ preserves the arc-components, and
by Lemma~\ref{lem:orient}, preserves the orientation of each arc-component as well.

Take a subsequence such that $A^{n_k}$ converges in Hausdorff metric, say to $B$. Since $x, h(x) \in B$,
we have $B \supset A$. Assume by contradiction that $B \neq A$. Fix $q, p$ arbitrary such that $h(\chain_q)$
refines $\chain_p$, and such that $\pi_p(B) \neq \pi_p(A)$ and a fortiori, that there is a link
$\ell \in \chain_p$ such that $\ell \cap A = \emptyset$ and $\pi_p(\ell)$ contains a boundary point of
$\pi_p(B)$.

Let $d_n = \max\{ L_p(y) : y \text{ is $p$-point in } A^n\}$. If $D := \sup d_n < \infty$, then we can pass
to the chain $\chain_{p+D}$ and find that all $A^{n_k}$'s go straight through $\chain_{p+D}$, hence the limit
is a straight arc as well, stretching from $x$ to $h(x)$, so $B = A$. Therefore $D = \infty$, and we can assume
without loss of generality that $d_{n_k} \to \infty$.

Since the link in $\ell$ is disjoint from $A$ but $\pi_p(\ell)$ contains a boundary point of $\pi_p(B)$, the
arcs $A^{n_k}$ intersects $\ell$ for all $k$ sufficiently large. Therefore $A^{n_k} \cap \ell$ separates
$x^{n_k}$ from $h(x^{n_k})$; let $W^{n_k}$ be a component of $A^{n_k} \cap \ell$ between  $x^{n_k}$ and
$h(x^{n_k})$. Since $\pi_p(\ell)$ contains a boundary point of $\pi_p(B)$, $W^{n_k}$ contains at least one
$p$-point for each $k$. Lemma~\ref{lem:infinite} states that there is $y^{n_k} \in W^{n_k}$ such that
$h(y^{n_k}) \in W^{n_k}$ as well, and therefore $x^{n_k} \prec y^{n_k}, h(y^{n_k}) \prec h(x^{n_k})$ (or
$y^{n_k} \prec x^{n_k}, h(x^{n_k}) \prec h(y^{n_k})$), contradicting that $h$ preserves orientation.
\end{proof}

Let us finally prove Theorem~\ref{thm:Fibo_iso}:

\begin{proof}[Proof of Theorem~\ref{thm:Fibo_iso}] Fix $R$ such that $h$ is pseudo-isotopic to $\sigma^R$.
Then $\sigma^{-R} \circ h$ is pseudo-isotopic to the identity. So renaming $\sigma^{-R} \circ h$ to $h$ again,
we need to show that $h$ is isotopic to the identity.

If $x$ is a folding point of $K_s$, then $h(x) = x$ by Theorem~\ref{thm:folding}. In this case, and in fact for
any point such that $h(x) = x$, we let $H(x,t) = x$ for all $t \in [0,1]$. If $h(x) \neq x$, then $x$ and $h(x)$
belong to the same arc-component, and the arc $A = [x, h(x)]$ contains no folding point by
Corollary~\ref{cor:arc_A}. By Lemma~\ref{lem:folding_p-points}, $A$ contains only finitely many $p$-points, so
there is $m$ such that $\pi_m: A \to \pi_m(A)$ is one-to-one. In this case,
$$
H(x,t) = \pi_m^{-1}|_A [ (1-t) \pi_m(x) + t \pi_m(h(x)) ].
$$
Clearly $t \mapsto H( \cdot, t)$ is a family of maps connecting $h$ to the identity in a single path as
$t \in [0,1]$. We need to show that $H$ is continuous both in $x$ and $t$, and that $H( \cdot, t)$ is a bijection
for all $t \in [0,1]$.

Let $z \in K_s$ and $(z^n, t^n) \to (z, t)$. If $h(z) = z$, then $H(z,t) \equiv z$, and
Proposition~\ref{prop:Hausdorff} implies that $H(z^n, t^n) \to z =  H(z,t)$. So let us assume that
$h(z) \neq z$. The arc $A = [z, h(z)]$ contains no folding point, so by Lemma~\ref{lem:folding_p-points},
for all $x \in A$, there is $\eps(x) > 0$ and $W(x) \in \N$ such that $B_{\eps(x)}(x)$ contains no $p$-point of
$p$-level $\geq W(x)$. By compactness of $A$, $\eps := \inf_{x \in A} \eps(x) > 0$ and
$\sup_{x \in A} W(x) < \infty$, whence there is $m > p+W$ such that $V := \pi_m^{-1} \circ \pi_m(A)$ is
contained in an $\eps$-neighborhood of $A$ that contains no $p$-point.

By Proposition~\ref{prop:Hausdorff}, there is $N$ such that $A^n \subset V$ for all $n \geq N$, and in fact
$\pi_m(A^n) \to \pi_m(A)$. It follows that $H(z^n, t^n) \to H(z,t)$.

To see that $x \mapsto H(\cdot ,t)$ is injective for all $t \in [0,1]$, assume by contradiction that there is
$t_0 \in [0,1]$ and $x \neq y$ such that $H(x,t_0) = H(y,t_0)$. Then $x$ and $y$ belong to the same arc-component
$\A0$, which is the same as the arc-component containing $h(x)$ and $h(y)$. The smallest arc $J$ containing all
four point contains no folding point by Corollary~\ref{cor:arc_A}. Therefore there is $m$ such that
$\pi_m:J \to \pi_m(J)$ is injective, and we can choose an orientation on $\A0$ such that $x < y$ on $J$, and
$\pi_m(x) < \pi_m(y)$. Since $t \mapsto \pi_m \circ H(x , t)$ is monotone with constant speed depending only on
$x$, we find
$$
\pi_m(x) < \pi_m(y) <  \pi_m \circ H(x, t_0) = \pi_m \circ H(y, t_0) <  \pi_m \circ h(y) < \pi_m \circ h(x)
$$
This contradicts that $h$ preserves orientation on arc-components, see Lemma~\ref{lem:orient}.

To prove surjectivity, choose $x \in K_s$ arbitrary. If $h(x) = x$, then $H(x,t)= x$ for all  $t \in [0,1]$.
Otherwise, say if $h(x) > x$, there is $y < x$ in the same arc-component as $x$ such that $h(y) = x$. The map
$t \mapsto H( \cdot , t)$ moves the arc $[y,x]$ continuously and monotonically to $[h(y), h(x)] = [x,h(x)]$.
Therefore, for every $t \in [0,1]$, there is $y_t \in [y,x]$ such that $H(y_t, t) = x$. This proves surjectivity.

We conclude that $H(x,t)$ is the required isotopy between $h$ and the identity.
\end{proof}

\medskip
\noindent
Department of Mathematics,
University of Surrey\\
Guildford, Surrey, GU2 7XH,
United Kingdom\\
\texttt{h.bruin@surrey.ac.uk}\\
\texttt{http://personal.maths.surrey.ac.uk/st/H.Bruin/}

\medskip
\noindent
Department of Mathematics,
University of Zagreb\\
Bijeni\v cka 30, 10 000 Zagreb,
Croatia\\
\texttt{sonja@math.hr}\\
\texttt{http://www.math.hr/}$\sim$\texttt{sonja}

\end{document}